\newcommand{\blue}{\color{blue}}
\newcommand{\red}{\color{red}}
\newtheorem{Lemma}{Lemma}[section]
\newtheorem{Theorem}[Lemma]{Theorem}
\newtheorem{Proposition}[Lemma]{Proposition}
\newtheorem{Corollary}[Lemma]{Corollary}
\theoremstyle{definition}
\newtheorem{Definition}[Lemma]{Definition}
\newtheorem{Example}[Lemma]{Example}
\numberwithin{equation}{section}
\title[Weak friezes and frieze pattern determinants]
{Weak friezes and frieze pattern determinants}
\author{Thorsten Holm}
\address{Thorsten Holm, Leibniz Universit\"at Hannover,
Institut f\"ur Algebra, Zahlentheorie und Diskrete Mathematik,
Fakult\"at f\"ur Mathematik und Physik,
Welfengarten 1,
D-30167 Hannover, Germany}
\email{holm@math.uni-hannover.de}
\urladdr{https://www.iazd.uni-hannover.de/de/holm/}
\author{Peter J{\o}rgensen}
\address{Peter J{\o}rgensen, 
Department of Mathematics,
Aarhus University,
Ny Munkegade 118,
8000 Aarhus C,
Denmark}
\email{peter.jorgensen@math.au.dk}
\urladdr{https://sites.google.com/view/peterjorgensen}
\keywords{Determinant, Frieze pattern, Polygon, Weak frieze}
\subjclass[2020]{05B45, 05E99, 13F60, 15A15, 51M20, 52B45}
\begin{document}

\begin{abstract}
Frieze patterns have been introduced by Coxeter in the 1970's and have recently attracted
renewed interest due to their close connection with Fomin-Zelevinsky's cluster algebras. 
Frieze patterns can be interpreted as assignments of values to the diagonals of
a triangulated polygon satisfying certain conditions for crossing diagonals (Ptolemy relations).
Weak friezes, as introduced by \c{C}anak\c{c}{\i} and J{\o}rgensen, are generalizing this concept
by allowing to glue dissected polygons so that the Ptolemy relations only have to be satisfied
for crossings involving one of the gluing diagonals. 

To any frieze pattern one can associate a symmetric matrix using a triangular fundamental domain of the 
frieze pattern in the upper and lower half of the matrix and putting zeroes on the diagonal. 
Broline, Crowe and Isaacs have found a formula for the determinants of these matrices and their 
work has later been generalized in various directions by other authors. These frieze pattern determinants are the 
main focus of our paper. As our main result we show that this determinant behaves well with
respect to gluing weak friezes: the determinant is the product of the determinants for the 
pieces glued, up to a scalar factor coming from the gluing diagonal. Then we give several applications 
of this result, showing that formulas from the literature, obtained by Broline-Crowe-Isaacs, Baur-Marsh, 
Bessenrodt-Holm-J{\o}rgensen and Maldonado can all be obtained as consequences of our result. 
\end{abstract}

\maketitle

%%%%%%%%%%%%%%%%%%%%%%%%%%%%%%%%%%%
%%%%%%%%%%%%%%%%%%%%%%%%%%%%%%%%%%%
\section{Introduction}

Frieze patterns have been introduced by Coxeter \cite{Cox71} around 1970 as certain patterns
of numbers. Soon afterwards 
Conway and Coxeter gave a geometric-combinatorial model for frieze patterns over positive
integers by showing that these are in bijection with triangulations of polygons. Back then, one
might have considered frieze patterns to be merely a nice piece of recreational mathematics. This 
changed completely some twenty years ago after Fomin and Zelevinsky came up with the 
fundamental and far-reaching notion of cluster algebras \cite{FZ02}. The exchange 
relations for cluster algebras mimic the conditions defining frieze patterns, e.g. for Dynkin type
$A$ one gets the cluster variables as entries in a frieze pattern.
This has been the starting point for a revived interest in frieze patterns with many new
developments in recent years. Frieze patterns now form a nexus between different areas 
of mathematics, like algebra, combinatorics and geometry, see the survey \cite{MG15}.

The classic notion of frieze pattern has been varied and generalized in different ways, via
finite and infinite frieze patterns, 
$SL_k$-frieze patterns or $SL_k$-tilings of the plane, each topic
with an extensive recent literature. For our purposes, the concept of frieze patterns with coefficients
is relevant, see \cite{CHJ20} for details. 
A frieze pattern with coefficients over a field $K$ of height $n-3$ ($n\ge 3$ a 
natural number) 
is an infinite array of the form as in Figure \ref{fig:friezecoeff}
\begin{figure}
$$
\begin{array}{cccccccccc}
 & \ddots & & & &\ddots  & & & \\
0 & c_{i-1,i} & c_{i-1,i+1}  & \cdots & \cdots & c_{i-1,n+i-3} & c_{i-1,n+i-2} & 0 & & \\
& 0 & c_{i,i+1} & c_{i,i+2} & \cdots & \cdots & c_{i,n+i-2} & c_{i,n+i-1} & 0 & \\
& & 0 & c_{i+1,i+2} & c_{i+1,i+3} & \cdots & \cdots & c_{i+1,n+i-1} & c_{i+1,n+i} & 0 \\
 & & & & \ddots  & & & &\ddots  & 
\end{array}
$$
\caption{A frieze pattern with coefficients. \label{fig:friezecoeff}}
\end{figure}
such that $c_{i,j}\in K$, $c_{i,i+1}\neq 0$ and for the determinant of every
(complete) adjacent $2\times 2$-submatrix we have
\begin{equation}\label{eq:local}
c_{i,j} c_{i+1,j+1} - c_{i,j+1}c_{i+1,j} = c_{i+1,n+i}c_{j,j+1}.
\end{equation}
(For this definition one has to specify an algebraic structure
from which the entries can be chosen. We choose a field $K$ here but the definition applies
equally well for other algebraic structures.)

The classic frieze patterns have the property that $c_{i,i+1}=1=c_{i,n+i-1}$ for
all $i\in \mathbb{Z}$. When a frieze pattern with coefficients is tame (e.g. this is the case when
all entries $c_{i,j}$ are non-zero \cite[Proposition 2.6]{CHJ20}) then it satisfies a glide symmetry,
i.e. $c_{i,j}=c_{j,n+i}$ (see \cite[Theorem 2.12]{CHJ20}). Therefore, the region
with entries $c_{i,j}$ with $1\le i<j\le n$ forms a fundamental domain under glide reflection,
see Figure \ref{fig:friezepattern}.
\begin{figure}
$$\begin{array}{ccccc}
~0~ & ~c_{1,2}~ & ~c_{1,3}~ & ~\ldots~ & ~c_{1,n}~ \\
& 0 & c_{2,3} & \ldots & c_{2,n} \\
& & \ddots & \ddots & \vdots \\
& & & \ddots & c_{n-1,n} \\
& & & & 0 
\end{array}
$$
\caption{A fundamental domain of a tame frieze pattern with coefficients. \label{fig:friezepattern}}
\end{figure}

This gives a different and very useful viewpoint: the entries $c_{i,j}$ in this fundamental domain correspond 
bijectively to the diagonals $\{i,j\}$ of an $n$-gon, and the entries then satisfy all {\em Ptolemy relations}
(see \cite[Theorem 3.3]{CHJ20}):
$$c_{i,k}c_{j,\ell} = c_{i,\ell}c_{j,k} + c_{i,j}c_{k,\ell} \mbox{\hskip0.2cm for $1\le i\le j\le k\le \ell\le n$}
$$
see Figure \ref{fig:Ptolemy} for an illustration.
\begin{figure}
  \begin{tikzpicture}[auto]
    \node[name=s, draw, shape=regular polygon, regular polygon sides=500, minimum size=4cm] {};
    \draw[thick] (s.corner 60) to (s.corner 180);
    \draw[thick] (s.corner 180) to (s.corner 300);
    \draw[thick] (s.corner 300) to (s.corner 400);
    \draw[thick] (s.corner 400) to (s.corner 60);
    \draw[thick] (s.corner 60) to (s.corner 300);
    \draw[thick] (s.corner 180) to (s.corner 400);
    
    \draw[shift=(s.corner 60)]  node[above]  {{\small $i$}};
    \draw[shift=(s.corner 180)]  node[left]  {{\small $j$}};
    \draw[shift=(s.corner 300)]  node[below]  {{\small $k$}};
    \draw[shift=(s.corner 400)]  node[right]  {{\small $\ell$}};
   \end{tikzpicture}
   \caption{The Ptolemy relations visualized. \label{fig:Ptolemy}}
\end{figure}
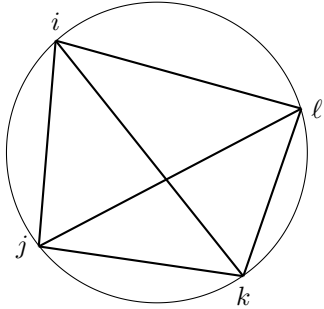
To distinguish these viewpoints we speak of a {\em frieze with coefficients} when we mean an assignment of values
to diagonals of a polygon such that all Ptolemy relations hold, and we speak of a {\em frieze pattern with
coefficients} when dealing with arrays of numbers. 

In their paper \cite{BCI74}, Broline, Crowe and Isaacs considered symmetric matrices obtained from 
(classic) frieze patterns by using the fundamental domain in Figure \ref{fig:friezepattern} as upper part and reflecting 
it along the main diagonal consisting of zeroes. They have shown that the determinant of any such 
$(n\times n)$-matrix is $-(-2)^{n-2}$. This nice formula has later been generalized to the cluster 
algebras setting by Baur and Marsh, where the entries of the matrix are cluster variables. Then the formula 
for the determinant reads $-(-2)^{n-2}c_{1,2}c_{2,3}\ldots c_{n-1,n}c_{n,1}$ \cite[Theorem 1.3]{BM12}. 
Another direction of generalization of the Broline-Crowe-Isaacs approach is to consider {\em 
generalized frieze patterns} coming from $d$-angulations of polygons (instead of triangulations for
frieze patterns) \cite{BHJ14}. Then the determinant of the corresponding symmetric matrix takes the
form $(-1)^{\ell d-1} (d-1)^{\ell}$ where $\ell$ is the number of $d$-gons in the $d$-angulation
\cite[Theorem 1.2]{BHJ14}. 

The main aim of this paper is to give a unified approach to the various generalizations of the 
Broline-Crowe-Isaacs formula appearing in the literature. 
The concept for achieving this is the notion of a {\em weak frieze}
introduced by \c{C}anak\c{c}{\i} and J{\o}rgensen \cite{CJ21}. Roughly speaking, a weak frieze 
is a map $f$
assigning values to the diagonals of a dissected polygon such that only those Ptolemy 
relations are required to hold for which one of the crossing diagonals is in the dissection. 
See Section \ref{sec:weakfrieze}
for precise definitions and more details. This yields arrays of numbers like the frieze pattern with coefficients
in Figure \ref{fig:friezecoeff}, but since not all Ptolemy relations have to be satisfied, not all
the neighbouring $2\times 2$-determinants satisfy equation (\ref{eq:local}). In this way, we get
{\em weak frieze patterns with coefficients}, e.g. the generalized frieze patterns studied in \cite{BHJ14},
see Section \ref{subsec:BHJ}. Placing the values $f(i,j)=f(j,i)$ of the diagonal $\{i,j\}$ as $(i,j)$-entry
of a symmetric matrix, we obtain a natural generalization of the Broline-Crowe-Isaacs matrices. 

Weak friezes on polygons can be glued to give new weak friezes \cite[Theorem B]{CJ21}, and
we recall this construction in Section \ref{sec:weakfrieze}. As our main result we prove in Theorem
\ref{thm:gluedet} a formula for the determinant of the symmetric matrix obtained when glueing two weak friezes.
Namely, let $P$ and $Q$ be polygons which are glued along a diagonal $d$ to a polygon $R$. 
Consider the dissection $D=\{d\}$ on $R$ and suppose that $f:\mathrm{diag}(R)\to K$ 
is a weak frieze with respect to $D$ such that $f(d)$ is invertible in $K$. 
Restricting $f$ to $P$ and $Q$ yields 
weak friezes $f_P$ and $f_Q$, and we let $M_{f}$, $M_{f_P}$, $M_{f_Q}$ be the 
corresponding symmetric matrices. Then we prove in Theorem \ref{thm:gluedet} that
$$\mathrm{det}(M_f) =  -f(d)^{-2} \cdot \mathrm{det}(M_{f_P})\cdot \mathrm{det}(M_{f_Q}).
$$
In Section \ref{sec:application} we demonstrate that our main result indeed
gives a unified approach to various results on frieze pattern determinants from the literature. 
We show how theorems by Broline-Crowe-Isaacs \cite{BCI74}, Baur-Marsh \cite{BM12} and 
Bessenrodt-Holm-J{\o}rgensen \cite{BHJ14} can all be obtained as consequences of our 
Theorem \ref{thm:gluedet}. Moreover, we discuss a recent paper by Maldonado \cite{Mal22} where the author
introduces the notion of a {\em frieze matrix}. 
The main result of \cite{Mal22} is a formula for the determinant of a frieze matrix, see 
\cite[Theorem 2.5]{Mal22} and it is claimed that this gives a generalization of the Baur-Marsh
formula \cite[Theorem 1.3]{BM12}. We shall explain in Subsection \ref{subsec:Maldonado}
that this can in fact be obtained as a 
consequence of the Baur-Marsh result. Since in turn the Baur-Marsh formula is a consequence of our
main result, this shows that also the recent result of Maldonado is a consequence of our main result
Theorem \ref{thm:gluedet}.

In addition to unifying various results on frieze determinants from the literature,
our formula in Theorem 3.3 for the determinant of the matrix coming from
a weak frieze pattern allows to compute the determinants of many further matrices. 
The new flexibility in our approach is that on each of the polygons glued to a weak 
frieze we can choose the values on the diagonals arbitrarily (whereas in 
\cite{BM12}, \cite{BCI74} all Ptolemy relations have to be satisfied,
and in \cite{BHJ14} all diagonals are assigned the value 1). 

%%%%%%%%%%%%%%%%%%%%%%%%%%%%%
%%%%%%%%%%%%%%%%%%%%%%%%%%%%%
\section{Weak friezes} \label{sec:weakfrieze}

From now on we fix a field $K$.
\smallskip

Let $P$ be a convex polygon in the Euclidean plane with vertices denoted cyclically by $\{1,2,\ldots,n\}$.
For each pair of different vertices $i\neq j$ there is a {\em diagonal} $\{i,j\}$. If $i$ and $j$ are not neighbouring 
vertices (in the cyclic ordering) then we speak of an {\em internal diagonal}. The set of all diagonals of $P$
is denoted by $\mathrm{diag}(P)$.

Two diagonals $\{i,j\}$ and $\{k,\ell\}$ are called {\em crossing} if in the cyclic ordering of vertices we have
$i<k<j<\ell$ or $i<\ell<j<k$ (note that when drawing the internal diagonals as straight lines in $P$ this means
that the diagonals cross in the interior of $P$).

A {\em dissection} $D$ of $P$ is a set of pairwise non-crossing internal diagonals of $P$ (where the case
$D=\emptyset$ is allowed). 

\begin{Definition} \label{def:weakfrieze}
Let $P$ be a polygon with vertices $\{1,2,\ldots,n\}$
and let $f:\mathrm{diag}(P)\to K$
be a map assigning values in $K$ to each diagonal in $P$. We write $f(i,j)$ for $f(\{i,j\})$. 
\begin{enumerate}
\item[{(a)}] We call the map $f$ a {\em frieze} if for all crossing diagonals $\{i,j\}$ and $\{k,\ell\}$ 
the {\em Ptolemy relation} is satisfied, that is
$$f(i,j)f(k,\ell) = f(i,k)f(j,\ell) + f(i,\ell)f(j,k).
$$
\item[{(b)}] (\c{C}anak\c{c}{\i}, J{\o}rgensen \cite{CJ21}) Let $D$ be a dissection of $P$. 
The map $f$ is called a {\em weak frieze with respect to $D$} if the Ptolemy relations are satisfied
for those pairs of crossing diagonals where one of the diagonals is in $D$.  
\end{enumerate}
\end{Definition}

A crucial property of weak friezes is that they can be glued to larger weak friezes. This is one of the main
results of \cite{CJ21}. (The result is stated in \cite{CJ21} for semifields but it goes through verbatim for
the present formulation.)

\begin{Theorem} (\c{C}anak\c{c}{\i}, J{\o}rgensen \cite[Theorem B]{CJ21}) Let $P$ be a polygon,
let $d_1,\ldots,d_m$ be pairwise non-crossing internal diagonals of $P$, dividing $P$ into subpolygons
$P_1,\ldots,P_{m+1}$, and let $D_i$ be a dissection of $P_i$ for each $i$. The disjoint union
$D=\{d_1,\ldots,d_m\}\cup D_1\cup\ldots\cup D_{m+1}$ is a dissection of $P$. 

Let $f_i:\mathrm{diag}(P_i)\to K$ be a weak frieze with respect to $D_i$ for
each $i$. Assume that if $P_i$ and $P_j$ share a diagonal $d$, then $f_i(d)=f_j(d)\neq 0$. 

Then there is a unique weak frieze $f:\mathrm{diag}(P)\to K$ with respect to $D$
such that the restriction of $f$ to $\mathrm{diag}(P_i)$ equals $f_i$ for all $i$.
\end{Theorem}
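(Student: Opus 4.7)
The plan is to induct on $m$, the number of gluing diagonals. For the induction step, I pick a $d_i$ corresponding to a leaf of the dual tree of the subpolygons $P_1,\ldots,P_{m+1}$; this lets me first glue the two subpolygons adjacent to $d_i$ via the $m=1$ case, then apply the inductive hypothesis to the resulting weak frieze together with the remaining pieces. Thus the core of the argument is the case $m=1$: two polygons $P_1, P_2$ glued along a single diagonal $d=\{p,q\}$ with $f_1(d)=f_2(d)\neq 0$.

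For uniqueness, any extension $f$ is forced on diagonals lying entirely in one $P_i$ (where it must equal $f_i$); and for a diagonal $\{c,e\}$ that crosses $d$, so $c\in P_1\setminus\{p,q\}$ and $e\in P_2\setminus\{p,q\}$, the weak frieze Ptolemy relation using the dissection diagonal $d$ forces
\[
f(c,e) \;=\; \frac{f_1(c,p)\,f_2(e,q) + f_1(c,q)\,f_2(e,p)}{f(d)},
\]
which is well-defined because $f(d)\neq 0$. I would take this formula as the \emph{definition} of $f$ on diagonals crossing $d$, together with $f=f_i$ elsewhere, and then verify that it is a weak frieze with respect to $D=\{d\}\cup D_1\cup D_2$.

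The verification amounts to checking the Ptolemy relation for every crossing pair $(\{a,b\},\{c,e\})$ with $\{a,b\}\in D$. If $\{a,b\}=d$, the identity holds by definition. If $\{a,b\}\in D_1$ (the $D_2$ case being symmetric) and $\{c,e\}$ also lies in $P_1$, it follows from $f_1$ being a weak frieze; and a short cyclic-order check shows that a diagonal in $D_1$ cannot cross a diagonal contained entirely in $P_2$. The only substantial case is $\{a,b\}\in D_1$ crossed by some $\{c,e\}$ with $c\in P_1$ and $e\in P_2$. Substituting the defining formulas into the desired identity and clearing $f(d)$ turns both sides into linear polynomials in $f_2(e,p)$ and $f_2(e,q)$; matching these two coefficients reduces the identity to the pair of Ptolemy relations inside $P_1$ for the $4$-tuples $(a,b,c,p)$ and $(a,b,c,q)$, each with $\{a,b\}$ as one of the crossing diagonals. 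I expect the main obstacle to be this reduction, which requires the geometric observation that if $\{c,e\}$ crosses $\{a,b\}$ in $P$ with $e\in P_2$, then both $\{c,p\}$ and $\{c,q\}$ cross $\{a,b\}$ inside $P_1$; writing out the cyclic order of $P_1$ as $p,v_1,\ldots,v_{k-1},q$, all three crossings are controlled by the same inequality on the indices of $a$, $b$, $c$. Once this is in place, the weak frieze property of $f_1$, applied with $\{a,b\}\in D_1$, delivers both identities, closing the $m=1$ case and hence the induction.
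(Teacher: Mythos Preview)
The paper does not contain a proof of this statement: it is quoted from \c{C}anak\c{c}{\i}--J{\o}rgensen \cite{CJ21} as background, with only the remark that the semifield version proved there carries over verbatim to a field $K$. There is therefore no proof in the present paper to compare your attempt against.

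That said, your strategy is sound: reduce to $m=1$ by induction along the dual tree, force the values on diagonals crossing $d$ via the Ptolemy relation through $d$, and then verify the remaining relations by matching coefficients of $f_2(e,p)$ and $f_2(e,q)$. One small gap to watch: your ``geometric observation'' that both $\{c,p\}$ and $\{c,q\}$ cross $\{a,b\}\in D_1$ fails when one of $a,b$ equals $p$ or $q$ (which can happen, since $p$ and $q$ are vertices of $P_1$ and internal diagonals of $P_1$ may end there). In that boundary case, say $a=p$, the diagonal $\{a,e\}=\{p,e\}$ lies entirely in $P_2$ and is \emph{not} given by your gluing formula, so the substitution you describe changes shape. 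The verification still goes through---the $f_2(e,q)$--coefficients then match trivially, and the $f_2(e,p)$--coefficients match by the single Ptolemy relation in $P_1$ for $\{a,b\}$ against $\{c,q\}$---but these boundary cases should be handled separately rather than subsumed under the generic claim.
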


\begin{Example} \label{ex:octagon}
We consider an octagon $P$ with the dissection $D$ dividing $P$ 
into three squares as in the following figure
\begin{center}
\begin{tikzpicture}[auto]
    \node[name=s, draw, shape=regular polygon, regular polygon sides=8, minimum size=3.2cm] {};
    \draw[thick] (s.corner 2) to (s.corner 7);
    \draw[thick] (s.corner 3) to (s.corner 6);
   \draw[shift=(s.corner 1)] node[above] 
   {{\footnotesize $2$}};
  \draw[shift=(s.corner 2)]  node[above] 
  {{\footnotesize $3$}};
  \draw[shift=(s.corner 3)]  node[left]  
  {{\footnotesize $4$}};
  \draw[shift=(s.corner 4)]  node[left]  
  {{\footnotesize $5$}};
  \draw[shift=(s.corner 5)]  node[below] 
  {{\footnotesize $6$}};
  \draw[shift=(s.corner 6)]  node[below] 
  {{\footnotesize $7$}};
  \draw[shift=(s.corner 7)]  node[right] 
  {{\footnotesize $0$}};
  \draw[shift=(s.corner 8)]  node[right] 
  {{\footnotesize $1$}};
   \end{tikzpicture}
\end{center}
On each subpolygon $P_i$ we choose the function $f_i:\mathrm{diag}(P_i)\to K$
as being constant and equal to 1. 
Then the Ptolemy relations for the diagonals of the octagon crossing one of the diagonals $\{0,3\}$ or
$\{4,7\}$ can be used to compute $f(k,\ell) = 2^r$ where $r$ is the number of diagonals from $D$ 
crossing the diagonal $\{k,\ell\}$.
\end{Example}

\begin{Definition}
Let $P$ be a polygon with vertices $\{1,2,\ldots,n\}$ and let $D$ be a dissection of $P$. 
For every weak frieze $f:\mathrm{diag}(P)\to K$ we arrange the values
$f(i,j)$ in a triangular shape as in Figure \ref{fig:friezepattern} and perform successive glide reflections 
to give an infinite array of the form
$$
\begin{array}{cccccccccccc}
& & & \ddots & & & & & & & & \\
~0~ & ~c_{1,2}~ & ~c_{1,3}~ & ~\ldots~ & ~\ldots~ & ~c_{1,n}~ & ~0~ & & & & & \\
& 0 & c_{2,3} & \ldots & \ldots & c_{2,n} & c_{1,2} & 0 & & &  &\\
& & 0 & c_{3,4} & \ldots & c_{3,n} & c_{1,3} & c_{2,3} & 0 & & & \\ 
& & & \ddots & \ddots & \vdots & \vdots & \vdots &  & & & \\
& & & & & c_{n-1,n} & c_{1,n-1} & & & & & \\
& & & & & 0 & c_{1,n} & c_{2,n} & \ldots & c_{n-1,n} & 0 & \\
& & & & & & 0 & c_{1,2} & c_{1,3} &  \ldots & c_{1,n} & 0 \\
& & & & & & & & & \ddots & & 
\end{array}
$$
Every such array of numbers coming from a weak frieze of a dissected polygon is called
a {\em weak frieze pattern with coefficients}.
\end{Definition}

\begin{Example}
The weak frieze in Example \ref{ex:octagon}
leads to the following array of numbers 
$$\begin{array}{ccccccccccccccccc}
 & & & & \ddots &  &  &  & & & & & & & & \\
~0~ & ~1~ & ~1~ & ~1~ & ~1~ & ~2~ & ~2~ & ~1~ & ~0~ & & & & & & & \\
& 0 & 1 & 1 & 2 & 4 & 4 & 2 & 1 & ~0~ & & & & & & \\
& & 0 & 1 & 2 & 4 & 4 & 2 & 1 & 1 & ~0~ & & & & & \\
& & & 0 & 1 & 2 & 2 & 1 & 1 & 1 & 1 & ~0~ & & & & \\
& & & & 0 & 1 & 1 & 1 & 1 & 2 & 2 & 1 & ~0~ & & & \\
& & & & & 0 & 1 & 1 & 2 & 4 & 4 & 2 & 1 & ~0~ & & \\
& & & & & & 0 & 1 & 2 & 4 & 4 & 2 & 1 & 1 & ~0~ & \\
& & & & & & & 0 & 1 & 2 & 2 & 1 & 1 & 1 & 1 & ~0~ \\
 & & & &  &  &  &  & & & &  & \ddots & & & \\
\end{array}
$$
This is a generalized frieze pattern in the sense of \cite{BHJ14}. Note that all the neighbouring 
$2\times 2$-determinants in this array are 0 or 1. (A combinatorial characterization which of these
determinants are 0 and which are 1 is given in \cite[Theorem 5.1]{BHJ14}.)
\end{Example}

%%%%%%%%%%%%%%%%%%%%%%%%%%%%%
%%%%%%%%%%%%%%%%%%%%%%%%%%%%%
\section{Weak frieze determinants}
In this section we introduce symmetric matrices to weak friezes of dissected polygons. This
is a natural generalization of the construction of Broline-Crowe-Isaacs \cite{BCI74} for classic 
frieze patterns. Then we show how the determinants of these matrices behave with
respect to gluing weak friezes. 

\begin{Definition} 
Let $P$ be a polygon with vertices $\{1,2,\ldots,n\}$ (cyclically ordered) and let $D$ be a dissection of $P$. 
We associate to each weak frieze $f:\mathrm{diag}(P)\to K$
with respect to $D$ an $n\times n$-matrix 
$M_f=(m_{i,j})_{1\le i,j\le n}$
where the entries are given by
$$m_{i,j} = \left\{ \begin{array}{ll}
0 & \mbox{if $i=j$} \\
f(i,j) & \mbox{if $i\neq j$}
\end{array} \right.
$$
Note that this is a symmetric matrix since $f(i,j)=f(\{i,j\})$ is the value of $f$ on the diagonal
$\{i,j\}=\{j,i\}$. Every such matrix $M_f$ is called a {\em weak frieze matrix}.
\end{Definition}

\begin{Example} \label{ex:det}
Let $P$ be an $n$-gon and let $D=\emptyset$ be the empty dissection. Then every map
$f:\mathrm{diag}(P)\to K$ is a weak frieze with respect to $D$ 
(see Definition \ref{def:weakfrieze}\,(b)). For the constant map $f$ with $f(d)=1$ for all $d\in \mathrm{diag}(P)$
the above matrix $M_f$ has the form
$$M_f= 
\begin{pmatrix}
0 & 1 & \ldots & \ldots & 1 \\
1 & 0 & 1 & \ldots & 1 \\
\vdots & \ddots & \ddots & \ddots & \vdots \\
1 & \ldots & 1 & 0 & 1 \\
1 & \ldots & \ldots & 1 & 0
\end{pmatrix}.
$$
For later reference we state that for the determinant of this matrix we have  
$\mathrm{det}(M_f)= (-1)^{n-1} (n-1)$. In fact, replacing the last row by the sum of all rows,
taking out the factor $n-1$ and then 
subtracting the last row from each of the first $n-1$ rows gives
\begin{eqnarray*}
\mathrm{det}(M_f) & = & \mathrm{det}\begin{pmatrix}
0 & 1 & \ldots & \ldots & 1 \\
1 & 0 & 1 & \ldots & 1 \\
\vdots & \ddots & \ddots & \ddots & \vdots \\
1 & \ldots & 1 & 0 & 1 \\
n-1 & \ldots & \ldots & n-1 & n-1
\end{pmatrix} 
=  (n-1) \,\mathrm{det}\begin{pmatrix}
0 & 1 & \ldots & \ldots & 1 \\
1 & 0 & 1 & \ldots & 1 \\
\vdots & \ddots & \ddots & \ddots & \vdots \\
1 & \ldots & 1 & 0 & 1 \\
1 & \ldots & \ldots & 1 & 1
\end{pmatrix} \\
& = &  
(n-1)\,\mathrm{det}\begin{pmatrix}
-1 & 0 & \ldots & \ldots & 0 \\
0 & -1 & 0 & \ldots & 0 \\
\vdots & \ddots & \ddots & \ddots & \vdots \\
0 & \ldots & 0 & -1 & 0 \\
1 & \ldots & \ldots & 1 & 1
\end{pmatrix} 
 \,=\,  (-1)^{n-1}(n-1).
\end{eqnarray*}
\end{Example}

\begin{Theorem} \label{thm:gluedet}
For some natural numbers $r,s\ge 3$ let $P$ be an $r$-gon and $Q$ an $s$-gon which we glue to
an $(r+s-2)$-gon $R$ as in the following figure.
\begin{figure}[H]
\renewcommand*\figurename{Abbildung}
\centering
\begin{tikzpicture}[scale=0.85]
      \draw[thick] (0,2) -- (0,-2);
      \draw[thick] (0,-2) -- (1,-1.7);
      \draw[thick] (0,-2) -- (-1,-1.7);
       \draw[thick] (-1,-1.7) -- (-1.8,-1.1);
       \draw[thick] (1,-1.7) -- (1.8,-1.1);
      \draw[thick] (0,2) -- (1,1.7);
       \draw[thick] (1,1.7) -- (1.8,1.1);
       \draw[thick] (0,2) -- (-1,1.7);
       \draw[thick] (-1,1.7) -- (-1.8,1.1);
       \draw[thick] (-2.1,0.7) -- (-1.8,1.1);
       \draw[thick] (2.1,0.7) -- (1.8,1.1);
       \draw[thick] (2.1,-0.7) -- (1.8,-1.1);
       \draw[thick] (-2.1,-0.7) -- (-1.8,-1.1);
     
      \node (l1) at (2.2,0.1) [] {{$\vdots$}};
       \node (l4) at (-2.2,0.1) [] {{$\vdots$}};

      \node (A01) at (0,2) [] {{$\bullet$}};
      \node (A02) at (0,-2) [] {{$\bullet$}};
      \node (A03) at (1,-1.7) [] {{$\bullet$}};
      \node (A04) at (1.8,-1.1) [] {{$\bullet$}};
      \node (A05) at (-1,-1.7) [] {{$\bullet$}};
      \node (A06) at (-1.8,-1.1) [] {{$\bullet$}};
      \node (A07) at (1,1.7) [] {{$\bullet$}};
      \node (A08) at (1.8,1.1) [] {{$\bullet$}};
      \node (A09) at (-1,1.7) [] {{$\bullet$}};
      \node (A10) at (-1.8,1.1) [] {{$\bullet$}};
      
      \node (x0) at (0,-2.3) [] {{{\footnotesize $r$}}};
      \node (x1) at (1.1,-2) [] {{{\footnotesize $1$}}};
      \node (x2) at (2,-1.3) [] {{{\footnotesize $2$}}}; 
      \node (x3) at (0,2.3) [] {{{\footnotesize $r-1$}}};
      \node (x4) at (1.5,1.9) [] {{{\footnotesize $r-2$}}};
      \node (x5) at (2.4,1.25) [] {{{\footnotesize $r-3$}}}; 
      \node (x6) at (-1.45,-1.9) [] {{{\footnotesize $r+1$}}};
      \node (x7) at (-2.3,-1.3) [] {{{\footnotesize $r+2$}}}; 
       \node (x8) at (-1.85,1.9) [] {{{\footnotesize $r+s-2$}}};
      \node (x9) at (-2.65,1.25) [] {{{\footnotesize $r+s-3$}}}; 
      
      \node (x8) at (1,0) [] {{$P$}};
      \node (x9) at (-1,0) [] {{$Q$}}; 
\end{tikzpicture}
\end{figure}
On the polygon $R$ we consider the dissection $D=\{(r-1,r)\}$ only consisting of the diagonal along which 
$P$ and $Q$ are glued. Let $f:\mathrm{diag}(R)\to K$ be a weak frieze on $R$ with
respect to $D$ and suppose that $f(r-1,r)$ is non-zero. 
By restriction of $f$ to the diagonals of $P$ and $Q$ we get weak friezes $f_P$ and
$f_Q$, respectively. Let $M_f$, $M_{f_P}$ and $M_{f_Q}$ be the matrices associated to these weak friezes.
Then for the determinants the following formula holds:
$$\mathrm{det}(M_f) = - f(r-1,r)^{-2}\cdot \mathrm{det}(M_{f_P})\cdot \mathrm{det}(M_{f_Q}).
$$  
\end{Theorem}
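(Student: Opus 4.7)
My plan is to exploit the block structure of $M_f$ coming from the partition of the vertices of $R$ into three groups: $A = \{1,\dots,r-2\}$ (interior to $P$), $B = \{r-1, r\}$ (the endpoints of the gluing diagonal), and $C = \{r+1,\dots,r+s-2\}$ (interior to $Q$). Ordering rows and columns as $A,B,C$ and writing
$$M_f \;=\; \begin{pmatrix} M_{AA} & M_{AB} & M_{AC} \\ M_{BA} & M_{BB} & M_{BC} \\ M_{CA} & M_{CB} & M_{CC} \end{pmatrix},$$
one reads off directly that the upper-left $r \times r$ corner is $M_{f_P}$, that the lower-right $s \times s$ corner is $M_{f_Q}$, and that $M_{BB} = \alpha J$, where $\alpha = f(r-1,r)$ and $J$ is the $2 \times 2$ swap matrix. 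So the whole question reduces to understanding the cross block $M_{AC}$.

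The key input from the weak-frieze hypothesis is the Ptolemy relation for crossings with $\{r-1,r\}$. For $i \in A$ and $j \in C$ the diagonal $\{i,j\}$ crosses $\{r-1,r\}$ in $R$ (one checks the cyclic order $i, r-1, j, r$), so
$$\alpha\, f(i,j) \;=\; f(i,r-1)\,f(j,r) + f(i,r)\,f(j,r-1).$$
Equivalently, $M_{AC} = \alpha^{-1} M_{AB}\, J\, M_{CB}^{T}$; the entire $A \times C$ block is captured by the two $B$-columns of $M_f$. This rank-two factorisation is what drives the whole argument.

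I would now clear the $A \times C$ and $B \times C$ blocks by determinant-preserving column operations. For each $j \in C$, replace column $j$ by column $j$ minus $\alpha^{-1} f(j,r)\cdot\operatorname{col}(r-1)$ minus $\alpha^{-1} f(j,r-1)\cdot\operatorname{col}(r)$. By the Ptolemy identity the new $A$-entries vanish, and a direct check using $M_{BB} = \alpha J$ kills the two $B$-entries as well. The new $C$-entries of column $j$ are exactly the entries of
$$\widetilde{M}_{CC} \;:=\; M_{CC} - M_{CB}\,M_{BB}^{-1}\,M_{BC},$$
the Schur complement of $M_{BB}$ in $M_{f_Q}$. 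After these operations $M_f$ has become block lower-triangular with diagonal blocks $M_{f_P}$ and $\widetilde{M}_{CC}$, so $\det(M_f) = \det(M_{f_P})\,\det(\widetilde{M}_{CC})$.

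To finish, the Schur-complement determinant formula applied to $M_{f_Q}$ gives $\det(M_{f_Q}) = \det(\alpha J)\det(\widetilde{M}_{CC}) = -\alpha^2 \det(\widetilde{M}_{CC})$, so $\det(\widetilde{M}_{CC}) = -\alpha^{-2}\det(M_{f_Q})$, and substituting yields the claimed identity. The only real thing to spot is the rank-two factorisation of $M_{AC}$ coming from Ptolemy; once this is noticed, the proof is routine block linear algebra. The small bookkeeping points are verifying the cyclic order of $i, r-1, j, r$ in $R$ so that Ptolemy applies with the sign used above, and that the corner blocks are literally $M_{f_P}$ and $M_{f_Q}$ (not merely equal to them up to relabelling), despite the vertex labels of $P$ and $Q$ not being cyclically consecutive inside $R$.
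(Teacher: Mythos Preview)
Your proof is correct and follows essentially the same route as the paper's: both use the Ptolemy relation for crossings with $\{r-1,r\}$ to clear an off-diagonal block via elementary operations, reducing to a block-triangular determinant times a $2\times 2$ factor $\det\begin{pmatrix}0&\alpha\\\alpha&0\end{pmatrix}=-\alpha^{2}$. The only cosmetic difference is a symmetry in the bookkeeping: the paper performs \emph{row} operations on the $A$-rows, leaving $M_{f_Q}$ intact and producing the Schur complement inside $M_{f_P}$, whereas you perform \emph{column} operations on the $C$-columns, leaving $M_{f_P}$ intact and producing the Schur complement inside $M_{f_Q}$ (which you then identify by name rather than computing by hand).
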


\smallskip

\begin{proof}
For abbreviation we set $c:=f(r-1,r)$, then the matrix $M_f$ has the following shape 
$$M_f = \left( \begin{array}{ccc|cc|ccc}
 & &  &  \ast & \ast & & &   \\
  & M' &  & \vdots & \vdots &  & \ast &  \\
   & &  & \ast & \ast & & & \\ \hline
  \ast   & \ldots & \ast  & 0 & c & \ast & \ldots & \ast \\
     \ast & \ldots & \ast & c &0 &  \ast & \ldots & \ast \\ \hline
      & &  & \ast  & \ast & &  &  \\
       & \ast &  &   \vdots & \vdots & & N' &  \\
       & &  &  \ast & \ast & & &   \\
  \end{array} \right)
  $$
where the matrix $M'$ together with the highlighted rows and columns $r-1$ and $r$ is the matrix
$M_{f_P}$ and similarly, the matrix $N'$ together with rows and columns $r-1$ and $r$ is the matrix
$M_{f_Q}$. 

The Ptolemy relations for the weak frieze $f$ are as follows: for every
$1\le i\le r-2$ and every $r+1\le j\le r+s-2$ we have
$$f(r-1,r) f(i,j) = f(i,r-1)f(r,j)+f(i,r)f(r-1,j).
$$
Denoting the matrix entries by $M_f=(x_{k,\ell})$,
these Ptolemy relations read
\begin{equation} \label{eq:Ptolemy}
x_{i,j} = (c^{-1} x_{i,r-1}) x_{r,j} + (c^{-1}x_{i,r}) x_{r-1,j}.
\end{equation}
On the matrix $M_f$ we perform the following row operations, leading to the matrix
$\widetilde{M}_f=(\widetilde{x}_{k,\ell})$ (where $\ast$ stands for any column index):
$$\widetilde{x}_{i,\ast} = \left\{
\begin{array}{ll}
x_{i,\ast} - (c^{-1}x_{i,r-1}) x_{r,\ast} - (c^{-1}x_{i,r}) x_{r-1,\ast} & \mbox{~~for $1\le i \le r-2$} \\
x_{i,\ast} & \mbox{~~for $r-1\le i \le r+s-2$}
\end{array}
\right.
$$
That is, we perform row operations on the rows belonging to the subpolygon $P$, but leave the rows
coming from $Q$ unchanged. For the entries of the new matrix $\widetilde{M}_f$ the following holds
for all $1\le i\le r-2$:
\begin{enumerate}
\item[{(i)}] $\widetilde{x}_{i,r-1} = x_{i,r-1} - (c^{-1}x_{i,r-1}) \underbrace{x_{r,r-1}}_c-
(c^{-1}x_{i,r})\underbrace{x_{r-1,r-1}}_0
= 0$
\item[{(ii)}] $\widetilde{x}_{i,r} = x_{i,r} - (c^{-1}x_{i,r-1}) \underbrace{x_{r,r}}_0-
(c^{-1}x_{i,r})\underbrace{x_{r-1,r}}_c
= 0$ 
\item[{(iii)}] $\widetilde{x}_{i,j}=0$ for every $r+1\le j\le r+s-2$ by the Ptolemy relations in Equation 
(\ref{eq:Ptolemy}). 
\end{enumerate}
This means that the new matrix $\widetilde{M}_f$ is of the form
$$\widetilde{M}_f = \left( \begin{array}{ccc|cc|ccc}
 & &  &  0 & 0 & & &   \\
  & \widetilde{M}' &  & \vdots & \vdots &  & 0 &  \\
   & &  & 0 & 0 & & & \\ \hline
  \ast   & \ldots & \ast  & 0 & c & \ast & \ldots & \ast \\
     \ast & \ldots & \ast & c &0 &  \ast & \ldots & \ast \\ \hline
      & &  & \ast  & \ast & &  &  \\
       & \ast &  &   \vdots & \vdots & & N' &  \\
       & &  &  \ast & \ast & & &   \\
  \end{array} \right)
  $$
  for a suitable matrix $\widetilde{M}'$. Note that this matrix has a (lower) block triangular shape
  with matrices $\widetilde{M}'$ and $M_{f_Q}$ as diagonal blocks. Moreover, also 
  the matrix $\widetilde{M}_{f_P}$ consisting of the first $r$ rows and columns of $\widetilde{M}_{f}$
  has a block triangular shape, with $\widetilde{M}'$ and 
  $\begin{pmatrix} 0 & c \\ c & 0 \end {pmatrix}$ as blocks on the diagonal. Since 
  $\widetilde{M}_{f_P}$ has been obtained from $M_{f_P}$ by row operations, the two
  matrices have the same determinant.
  We can then conclude
  \begin{eqnarray*}
  \mathrm{det}(M_f) & = & \mathrm{det}(\widetilde{M}_f)
   =  \mathrm{det}(\widetilde{M}')\cdot \mathrm{det}(M_{f_Q}) 
   =  - c^{-2} \mathrm{det}(M_{f_P}) \cdot \mathrm{det}(M_{f_Q})
  \end{eqnarray*}
  and this completes the proof of the theorem. 
\end{proof}

%%%%%%%%%%%%%%%%%%%%%%%%%%%%%
%%%%%%%%%%%%%%%%%%%%%%%%%%%%%
\section{Applications} \label{sec:application}
In this section we show how several results on frieze pattern determinants from the 
literature can be obtained as consequences of our main result Theorem \ref{thm:gluedet}.

%%%%%%%%%%%%%%%%%%%%%%%%%%
\subsection{Frieze determinants of generalized frieze patterns (Bessenrodt-Holm-J{\o}rgensen \cite{BHJ14})}
\label{subsec:BHJ}
In the paper \cite{BHJ14} the authors introduce and study {\em generalized frieze patterns} which come
from $d$-angulations of polygons. In Definition 3.1 of \cite{BHJ14} a combinatorial procedure is described
how these generalized frieze patterns can be viewed as weak friezes. Namely, they are obtained as weak 
friezes glued from pieces (that is, weak friezes on subpolygons) where on each of the pieces the
weak frieze is constant and equal to 1. In this way, the construction is not restricted to $d$-angulations but 
pieces of different sizes can be glued to yield weak friezes on arbitrary dissections. 

Let $P_d$ be a $d$-gon (where $d\ge 3$) and $D=\emptyset$ the empty dissection,
and let $f:\mathrm{diag}(P_d)\to K$ be the weak frieze
which is constant and equal to 1. 
In Example \ref{ex:det} we have seen that for the determinant of the corresponding frieze matrix we have  
$\mathrm{det}(M_f)= (-1)^{d-1} (d-1)$.

Now let $P$ be an $n$-gon and let $D$ be a dissection of $P$ which divides $P$ into subpolygons 
$P_1,\ldots,P_{\ell}$ of sizes $d_1,\ldots,d_{\ell}$. Let $f:\mathrm{diag}(P)\to K$
be the weak frieze obtained by gluing the weak friezes $f_i:\mathrm{diag}(P_i)\to K$
which are constant and equal to 1. By applying Theorem \ref{thm:gluedet} inductively, 
the weak frieze matrix $M_f$ is the
product of the determinants of the weak frieze matrices $M_{f_i}$, up to a sign $(-1)^{\ell -1}$; in fact,
each gluing produces a sign and with $\ell$ pieces we have $\ell -1$ gluings; also note that the factor 
$f(r-1,r)^{-2}$ in Theorem \ref{thm:gluedet} is equal to 1 by assumption on the weak friezes $f_i$. Therefore,
from the above considerations we obtain the following consequence of our main result. For the second equation 
below use that $n=(\sum_{i=1}^{\ell} d_i)-2(\ell -1)$.

\begin{Corollary}
With the above notation we have
$$\mathrm{det}(M_f) = (-1)^{\ell -1}\cdot \prod_{i=1}^{\ell} (-1)^{d_i-1} (d_i-1)
= (-1)^{n-1}\prod_{i=1}^{\ell} (d_i-1).
$$
\end{Corollary}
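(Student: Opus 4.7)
The plan is to proceed by induction on the number $\ell$ of pieces in the dissection. For the base case $\ell=1$, the polygon $P$ is a single $d_1$-gon with empty dissection and $n=d_1$, so Example \ref{ex:det} directly gives $\det(M_f)=(-1)^{d_1-1}(d_1-1)$, matching both sides of the claimed formula since $\ell-1=0$.

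For the inductive step with $\ell\geq 2$, I would pick any single diagonal $d\in D$ and observe that it divides $P$ into two subpolygons $P'$ and $Q'$, each of which inherits from $D$ a dissection with strictly fewer pieces (say $k$ and $\ell-k$ respectively, with $1\leq k\leq \ell-1$). The restrictions $f|_{P'}$ and $f|_{Q'}$ are themselves obtained by gluing the same constant-$1$ weak friezes on their subpolygons, so the inductive hypothesis applies to each. Since $f(d)=1\neq 0$, Theorem \ref{thm:gluedet} gives
\[
\det(M_f)=-\det(M_{f|_{P'}})\cdot\det(M_{f|_{Q'}}).
\]
Combining this with the two inductive formulas produces a sign $-(-1)^{k-1}(-1)^{\ell-k-1}=(-1)^{\ell-1}$ and a product $\prod_{i=1}^{\ell}(-1)^{d_i-1}(d_i-1)$, establishing the first equality.

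For the second equality, I would use the elementary identity $n=\sum_{i=1}^{\ell}d_i-2(\ell-1)$, which follows because each of the $\ell-1$ gluing diagonals is shared by exactly two pieces. This gives
\[
(-1)^{\ell-1}\prod_{i=1}^{\ell}(-1)^{d_i-1}=(-1)^{\ell-1+\sum_{i}(d_i-1)}=(-1)^{\sum_i d_i-1}=(-1)^{n-1},
\]
where the last step uses that $\sum d_i$ and $n$ have the same parity.

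No step here looks particularly deep: the main content is packaged inside Theorem \ref{thm:gluedet} and Example \ref{ex:det}. The only mild subtlety is the bookkeeping — one must verify that cutting along a chosen $d\in D$ really does split the dissection $D$ into valid dissections of $P'$ and $Q'$ and that the glued constant-$1$ weak frieze restricts to the analogous glued weak friezes on each side, so that the inductive hypothesis can be applied. This is immediate from the definition of the gluing construction, so the argument is really a clean inductive unwrapping of Theorem \ref{thm:gluedet} together with a parity check.
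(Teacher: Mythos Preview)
Your proposal is correct and follows essentially the same approach as the paper: both argue by induction on $\ell$, using Example~\ref{ex:det} for the base case, Theorem~\ref{thm:gluedet} (with $f(d)=1$) for the inductive step, and the identity $n=\sum_i d_i-2(\ell-1)$ for the parity computation. The only cosmetic difference is that the paper phrases the induction as successively gluing on one piece at a time (yielding $\ell-1$ signs), whereas you cut along an arbitrary diagonal of $D$ into two parts of sizes $k$ and $\ell-k$; both schemes are equivalent.
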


In the special case where $d_1=\ldots=d_{\ell}=d$ for some $d\ge 3$, that is for the case of a
$d$-angulation, the above corollary recovers \cite[Theorem 1.2]{BHJ14}.

%%%%%%%%%%%%%%%%%%%%%%%%%%
\subsection{A frieze pattern determinant related to cluster algebras (Baur-Marsh \cite{BM12})}
\label{sec:BM}
For a cluster algebra of Dynkin type $A$ there is a combinatorial model given by triangulations of
polygons. The diagonals of the triangulation form the initial seed, the cluster variables 
correspond to the diagonals and they are obtained from the initial cluster variables by Ptolemy relations.    
Following this viewpoint, Baur and Marsh \cite{BM12} consider a triangulation $\pi$ 
of a regular $n$-gon $P_n$ 
and they associate to each diagonal $\{i,j\}$ of $P_n$ 
the corresponding cluster variable $u_{i,j}$. In our language of weak friezes this is a 
map $f:\mathrm{diag}(P_n)\to K$ where $f(i,j)=u_{i,j}$ assigns to each
diagonal the corresponding cluster variable. 
Then for the 
$n\times n$-matrix $U(\pi)=(u_{i,j})=(f(i,j))$ Baur and Marsh obtain a nice formula for the determinant.

\begin{Corollary} \label{cor:BM} (\cite[Theorem 1.3]{BM12}) With the above notation we have 
$$\mathrm{det}(U(\pi)) = -(-2)^{n-2} f(1,2)f(2,3)\cdot \ldots \cdot f(n-1,n)f(n,1).
$$
\end{Corollary}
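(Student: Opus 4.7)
My proof plan is to induct on the number of vertices $n$, using a slightly strengthened statement: for every polygon with $n\geq 3$ vertices, every triangulation $\pi$ of it, and every (full) frieze $f$ whose values on the internal diagonals of $\pi$ are nonzero, the associated matrix $U(\pi)$ satisfies $\mathrm{det}(U(\pi)) = -(-2)^{n-2}$ times the product of $f$ over the boundary edges of the polygon. In the Baur--Marsh setting the values $f(i,j)=u_{i,j}$ are cluster variables and hence nonzero, so the strengthened hypothesis applies to $P_n$. The base case $n=3$ is immediate: the triangulation is empty and direct expansion of the $3\times 3$ symmetric matrix with diagonal zeros gives $\mathrm{det}(U(\pi)) = 2\,f(1,2)f(2,3)f(3,1) = -(-2)^{1}f(1,2)f(2,3)f(3,1)$, as required.

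For the inductive step with $n\geq 4$, I would pick any diagonal $d\in \pi$; it splits $P_n$ into two subpolygons $P$ and $Q$ of sizes $r$ and $s$ with $r+s-2=n$, and $\pi$ restricts to triangulations $\pi_P$ and $\pi_Q$. Since $f$ is a full frieze, the restrictions $f_P$ and $f_Q$ are themselves friezes on $P$ and $Q$, and in particular weak friezes with respect to the one-diagonal dissection $\{d\}$ required in Theorem \ref{thm:gluedet}; moreover $f(d)\neq 0$ by hypothesis. Theorem \ref{thm:gluedet} then gives
$$\mathrm{det}(U(\pi)) \;=\; -f(d)^{-2}\,\mathrm{det}(U(\pi_P))\,\mathrm{det}(U(\pi_Q)),$$
and I would apply the induction hypothesis to each of the two factors on the right-hand side.

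What remains is sign-and-product bookkeeping, which is the only point requiring any care. The boundary of $P$ consists of $r-1$ edges of $P_n$ together with the diagonal $d$, and similarly for $Q$, so when the two inductive formulas are multiplied the combined boundary-edge product equals $f(d)^2$ times the product of $f$ over all $n$ edges of $P_n$. The $f(d)^2$ cancels the $f(d)^{-2}$ supplied by Theorem \ref{thm:gluedet}, while the three minus signs (one from Theorem \ref{thm:gluedet} and one from each inductive formula) together with the two powers of $-2$ combine as $(-1)\cdot \bigl(-(-2)^{r-2}\bigr)\cdot \bigl(-(-2)^{s-2}\bigr) = -(-2)^{(r-2)+(s-2)} = -(-2)^{n-2}$, completing the induction. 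I do not anticipate any deeper obstacle beyond this bookkeeping, which follows exactly the same template as the proof outlined in Subsection \ref{subsec:BHJ}.
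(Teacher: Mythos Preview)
Your proof is correct and follows essentially the same approach as the paper: induction on $n$ with the same $3\times 3$ base case, using Theorem~\ref{thm:gluedet} in the inductive step to reduce to smaller polygons. The only difference is cosmetic: the paper always peels off a single triangle (so the induction runs from $n$ to $n+1$, implicitly using that every triangulation has an ear that can be relabelled to sit at vertex $n+1$), whereas you split along an arbitrary diagonal of $\pi$ and use strong induction on both pieces. Your bookkeeping is correct, and your variant has the minor advantage of not needing the ear/relabelling step.
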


We shall show how this result can be obtained as a consequence from our main result Theorem \ref{thm:gluedet}.
Any triangulation $\pi$ of a regular $n$-gon $P_n$ can be obtained by successively gluing triangles.
Note that each triangle is a frieze (not only a weak frieze) because there are no internal diagonals. 
We proceed by induction on $n\ge 3$. For $n=3$, a (weak) frieze on a triangle gives a (weak) frieze matrix 
$$\begin{pmatrix} 0 & f(1,2) & f(1,3) \\ f(1,2) & 0 & f(2,3) \\ f(1,3) & f(2,3) & 0 
\end{pmatrix}. 
$$ 
Its determinant can easily be computed as $2f(1,2)f(2,3)f(1,3)$. Since $f(1,3)=f(3,1)$ (the 
cluster variables belong to unordered diagonals) this yields the formula in the corollary. 

Inductively, suppose any (weak) frieze $f:\mathrm{diag}(P_n)\to K$ satisfies that 
its frieze matrix $M_f$ has
$$\mathrm{det}(M_f) = -(-2)^{n-2} f(1,2)f(2,3)\cdot \ldots \cdot f(n-1,n)f(n,1).
$$

In the inductive step, we glue another triangle to the polygon $P_n$ and the (weak) frieze $f$
along the diagonal of $P_n$ with vertices $1$ and $n$. The new polygon $P_{n+1}$ with new
vertex $n+1$ yields a new (weak) frieze $f':\mathrm{diag}(P_{n+1})\to K$ 
whose restriction to $P_n$ is equal to $f$. By Theorem \ref{thm:gluedet} and the induction hypothesis, 
the new (weak) frieze matrix satisfies
\begin{eqnarray*}
\mathrm{det}(M_{f'}) & = &  -f(1,n)^{-2}\cdot (2f(1,n)f'(n,n+1)f'(n+1,1))\cdot \mathrm{det}(M_f)\\
& = & -f(1,n)^{-2}(2f(1,n)f'(n,n+1)f'(n+1,1))\cdot \\
& & \hskip1cm \cdot (-(-2)^{n-2}f(1,2)f(2,3)\cdot \ldots\cdot f(n-1,n)f(n,1)) \\
& = & 2\cdot (-2)^{n-2}f(1,2)f(2,3)\cdot \ldots \cdot f(n-1,n)f'(n,n+1)f'(n+1,1) \\
& = & -(-2)^{n-1} f'(1,2)f'(2,3)\cdot \ldots\cdot f'(n-1,n)f'(n,n+1)f'(n+1,1)
\end{eqnarray*}
and this proves the induction step. 
\smallskip

The first authors who proved a result on frieze pattern determinants were Broline, Crowe and Isaacs
\cite{BCI74}. Their result is about frieze pattern determinants for classic frieze patterns. That is, 
all the values $f(i,i+1)$ are equal to 1. The result in \cite{BCI74} states that the determinant of
such a frieze matrix is equal to $-(-2)^{n-2}$, and this is an immediate special case of
Corollary \ref{cor:BM}.

%%%%%%%%%%%%%%%%%%%%%%%%%%
\subsection{Determinants of frieze matrices (Maldonado \cite{Mal22})}
\label{subsec:Maldonado}
In a recent paper \cite{Mal22}, Maldonado introduces the notion of {\em frieze matrix} and 
he proves a formula for their determinants, see
\cite[Theorem 2.5]{Mal22}. It is claimed in \cite{Mal22} that this result generalizes the
formula by Baur and Marsh in \cite[Theorem 1.3]{BM12}. 
The aim of this section is to explain that the result of Maldonado 
can be obtained from the Baur-Marsh formula.
Since we have seen in Section \ref{sec:BM} that the Baur-Marsh formula can be obtained as a consequence of our
Theorem \ref{thm:gluedet} this means that also the formula of Maldonado is a consequence our
Theorem \ref{thm:gluedet}.

We start by recalling the relevant background from \cite{Mal22}. 

\begin{Definition} \label{def:friezematrix} (\cite[Definition 2.1]{Mal22})
A symmetric $n\times n$-matrix $C=(c_{i,j})$ over $K$ is called a {\em frieze matrix}
if $c_{i,j}=0$ if and only if $i=j$ and the entries satisfy the {\em generalized diamond rule}
$$c_{i,j} c_{i+1,j+1} - c_{i+1,j}c_{i,j+1} = c_{i,i+1} c_{j,j+1}$$
for all $j\ge i+1$.
\end{Definition}

In fact, we interpret the definition in \cite{Mal22} to mean that the generalized diamond rule should apply for all
$n-1\ge j\ge i+1\ge 2$. 

Then the main result of \cite{Mal22} is the following formula for the determinant of a frieze matrix.

\begin{Theorem} \label{thm:maldonado} (\cite[Theorem 2.5]{Mal22}) If $C=(c_{i,j})$ is a frieze matrix then
$$\mathrm{det}(C) = -(-2)^{n-2} c_{1,n} \prod_{i=1}^{n-1} c_{i,i+1}.
$$
\end{Theorem}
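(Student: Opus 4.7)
The plan is to recognize Maldonado's frieze matrices as the matrices $M_f$ of genuine friezes (in the sense of Definition~\ref{def:weakfrieze}(a)) on the $n$-gon, and then invoke Corollary~\ref{cor:BM} (Baur--Marsh), which has already been shown to be a consequence of Theorem~\ref{thm:gluedet}.

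First I observe that the generalized diamond rule is a Ptolemy relation in disguise. Setting $f(i,j) := c_{i,j}$ for $i \ne j$ and $f(i,i) := 0$, the identity
\[
c_{i,j} c_{i+1,j+1} - c_{i,j+1} c_{i+1,j} = c_{i,i+1} c_{j,j+1}
\]
rewrites as
\[
f(i,j) f(i+1,j+1) = f(i,i+1) f(j,j+1) + f(i,j+1) f(i+1,j),
\]
which for $j \ge i+2$ is precisely the Ptolemy identity for the crossing pair $\{i,j\}$, $\{i+1,j+1\}$; the degenerate case $j = i+1$ is tautological since $c_{i+1,i+1}=0$. Thus $f$ satisfies all ``short'' Ptolemy relations on $P_n$.

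Next I would propagate these local Ptolemy identities to the full family of Ptolemy relations for \emph{every} pair of crossing diagonals of $P_n$. Since $c_{i,j} \ne 0$ whenever $i \ne j$, the array obtained by reflecting $C$ into a bi-infinite glide-symmetric pattern is a tame frieze pattern with coefficients in the sense of \cite{CHJ20}, and \cite[Theorem 3.3]{CHJ20} shows that the local $2 \times 2$ determinant conditions are equivalent to the full set of Ptolemy identities for a frieze on the polygon. Alternatively one can run a direct induction on the span $(\ell - i) + (j - k)$ of a crossing pair $(\{i,j\},\{k,\ell\})$, using the local relations together with the nonvanishing of all entries to reduce larger crossings to smaller ones. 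Once propagation is established, $f$ is a genuine frieze on $P_n$ and $C = M_f$.

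Corollary~\ref{cor:BM} then yields
\[
\det(C) = -(-2)^{n-2} f(1,2) f(2,3) \cdots f(n-1,n) f(n,1) = -(-2)^{n-2} c_{1,n} \prod_{i=1}^{n-1} c_{i,i+1},
\]
using the symmetry $c_{n,1} = c_{1,n}$, which is the desired formula. The main obstacle is the propagation step; the cleanest route is to invoke \cite[Theorem 3.3]{CHJ20} after matching equation~(\ref{eq:local}) with the generalized diamond rule via the glide identity $c_{i+1,n+i} = c_{i,i+1}$, so that the condition on the fundamental domain indexed by $1 \le i < j \le n$ becomes exactly Maldonado's hypothesis.
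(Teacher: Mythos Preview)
Your overall strategy is the same as the paper's: identify $C$ with the matrix $M_f$ of a genuine frieze on $P_n$ and then apply Corollary~\ref{cor:BM}. The gap is in the propagation step, and specifically in your ``cleanest route'' via \cite[Theorem~3.3]{CHJ20}. That theorem applies to a frieze pattern with coefficients, meaning the local condition~(\ref{eq:local}) must hold for \emph{every} adjacent $2\times 2$ block of the infinite glide-symmetric array, not just for the blocks lying entirely inside a single fundamental domain. Maldonado's hypothesis covers only $1\le i<j\le n-1$; the paper stresses that ``there is no condition involving the last column of $C$''. At the seam between two adjacent fundamental domains (the blue and red columns in the paper's picture), the $2\times 2$ condition at position $(i,n)$ reads, after the glide identification $c_{k,n+1}=c_{1,k}$,
\[
c_{i,n}c_{1,i+1}-c_{i+1,n}c_{1,i}=c_{i,i+1}c_{1,n},
\]
which is not part of Definition~\ref{def:friezematrix} and has to be \emph{proved}. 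This is exactly Proposition~\ref{prop:overlap}, and the paper establishes it by an induction using \cite[Lemma~2.2]{Mal22}. Without this step, you cannot claim the glide-extended array is a frieze pattern with coefficients, so invoking \cite[Theorem~3.3]{CHJ20} is circular.

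Your alternative suggestion---a direct induction on the span of a crossing pair, using nonvanishing of the entries---would in principle bypass the glide construction altogether, but you have not carried it out, and it is essentially the same work as Proposition~\ref{prop:overlap} done in a different order. In short, your sketch matches the paper's argument at the structural level but omits precisely the one nontrivial lemma the paper isolates and proves.
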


We can use the upper part of a frieze matrix $C=(c_{i,j})$ as above to form part of the shape of 
a frieze pattern with coefficients:
$$\begin{array}{ccccc}
0 & c_{1,2} & c_{1,3} & \ldots & c_{1,n} \\
& 0 & c_{2,3} & \ldots & c_{2,n} \\
& & \ddots & & \vdots \\
& & & 0 & c_{n-1,n} \\
& & & & 0
\end{array}
$$
The generalized diamond rule in Definition \ref{def:friezematrix} states that for the determinant of
each neighbouring 
$2\times 2$-matrix we have
$$\mathrm{det}\begin{pmatrix}
c_{i,j} & c_{i,j+1} \\ c_{i+1,j} & c_{i+1,j+1} 
\end{pmatrix} = c_{i,i+1}c_{j,j+1},
$$
the product of the entries in the first non-trivial diagonal in rows $i$ and $j$. Moreover, the
conditions on the indices in Definition \ref{def:friezematrix} mean that this condition holds for all
$2\times 2$-matrices entirely contained in the above triangular shape of the upper part of $C$. 
Note that there is no condition involving the last column of the matrix $C$. However, we shall show now 
that suitable conditions follow from the ones given in Definition \ref{def:friezematrix}. 

\begin{Proposition} \label{prop:overlap}
Let $C=(c_{i,j})$ be a frieze matrix as in Definition \ref{def:friezematrix}. Then for all $1\le i\le n-1$
we have
$$c_{i,n}c_{1,i+1}-c_{i+1,n}c_{1,i} = c_{i,i+1}c_{1,n}.
$$
\end{Proposition}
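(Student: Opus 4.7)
The proposition is the Ptolemy relation at the four cyclically ordered vertices $1 < i < i+1 < n$; equivalently, it extends the generalized diamond rule to the $2\times 2$ minor at rows $(i,i+1)$ and the cyclic column pair $(n,1)$. The boundary cases $i=1$ and $i=n-1$ collapse to trivialities using $c_{1,1}=0$ and $c_{n,n}=0$, respectively, so the content lies in the range $2 \le i \le n-2$.

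My plan is to proceed by induction on $n$. The $(n-1)\times(n-1)$ submatrix of $C$ on rows and columns $\{1,\ldots,n-1\}$ is again a frieze matrix (the generalized diamond rules of Definition \ref{def:friezematrix} restrict trivially), so the inductive hypothesis yields
$$c_{i,n-1}\,c_{1,i+1} - c_{i+1,n-1}\,c_{1,i} = c_{i,i+1}\,c_{1,n-1}. \quad (\text{IH})$$
Set $D := c_{i,n}\,c_{1,i+1} - c_{i+1,n}\,c_{1,i} - c_{i,i+1}\,c_{1,n}$, and compute $D \cdot c_{i,n-1}$. Using the generalized diamond rule at rows $(i,i+1)$ and columns $(n-1,n)$ to substitute $c_{i+1,n}\,c_{i,n-1} = c_{i+1,n-1}\,c_{i,n} + c_{i,i+1}\,c_{n-1,n}$, and then invoking (IH), the product simplifies to
$$D \cdot c_{i,n-1} \;=\; c_{i,i+1}\Bigl(c_{i,n}\,c_{1,n-1} - c_{i,n-1}\,c_{1,n} - c_{1,i}\,c_{n-1,n}\Bigr).$$
Since $c_{i,n-1}\neq 0$ by the frieze matrix condition, it suffices to show the bracketed expression vanishes.

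The bracketed expression is itself a Ptolemy identity, namely the one at the vertices $(1, i, n-1, n)$, where the adjacent pair now sits at the high end rather than in the middle. This is the main obstacle: a symmetric computation reduces this auxiliary relation to yet further Ptolemy identities, so the cleanest resolution is a simultaneous induction on $n$ that establishes both families of Ptolemy relations in parallel, with the non-vanishing of all off-diagonal entries ensuring every division used in the reduction is legitimate. The base case $n=4$ is just the generalized diamond rule at rows $(1,2)$ and columns $(3,4)$.

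An alternative, slicker route avoids the joint induction entirely: one observes that the generalized diamond rule of Definition \ref{def:friezematrix} is precisely the local recurrence (\ref{eq:local}) of a frieze pattern with coefficients in the sense of \cite{CHJ20} (the right-hand sides agree via the glide-symmetry identity $c_{i+1,n+i} = c_{i,i+1}$). Since all off-diagonal entries of $C$ are non-zero, the extension of the upper triangle of $C$ by glide reflection is tame by \cite[Proposition~2.6]{CHJ20}, and \cite[Theorem~3.3]{CHJ20} then delivers every Ptolemy relation; the present proposition is the instance at vertices $(1, i, i+1, n)$.
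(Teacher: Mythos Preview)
Your reduction is correct up to the bracketed auxiliary identity
\[
c_{i,n}\,c_{1,n-1}-c_{i,n-1}\,c_{1,n}=c_{1,i}\,c_{n-1,n},
\]
but neither of the two ways you propose to close it is satisfactory as written.

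The ``slicker route'' via \cite[Theorem~3.3]{CHJ20} is circular in this context. To apply that theorem you must know that the glide-extended array is a frieze pattern with coefficients, and this means verifying the local recurrence~(\ref{eq:local}) at \emph{every} adjacent $2\times2$-block, including those straddling the seam between consecutive fundamental domains. That seam condition is exactly Proposition~\ref{prop:overlap}; the paper states the proposition precisely in order to justify invoking \cite{CHJ20} afterwards. The same circularity affects your appeal to \cite[Proposition~2.6]{CHJ20} for tameness.

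The joint induction on $n$ you sketch is not carried out, and it is heavier than necessary. Your auxiliary identity is the Ptolemy relation at the ordered quadruple $(1,i,n-1,n)$, which lies entirely inside the fundamental triangle---it is \emph{not} a wraparound relation. Such non-wraparound Ptolemy relations are exactly what \cite[Lemma~2.2]{Mal22} supplies for frieze matrices, and with that citation your argument closes in one line. (This is the same lemma the paper invokes for its own auxiliary relations.)

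The paper takes a different and somewhat more economical path: it fixes $n$ and inducts on $i$. The two auxiliary relations it needs, equations~(\ref{eq:step1}) and~(\ref{eq:step2}), are the non-wraparound Ptolemy relations at $(1,i-1,i,i+1)$ and $(i-1,i,i+1,n)$, each involving three consecutive vertices and hence only one step removed from the diamond rule. A direct substitution together with the hypothesis at $i-1$ then gives the result, without passing to a smaller matrix and without any secondary family of identities to track.
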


\begin{proof}
We proceed by induction on $i$. For $i=1$ we have
$$c_{1,n}c_{1,2}-c_{2,n}c_{1,1} = c_{1,2}c_{1,n}
$$
since $c_{1,1}=0$ by Definition \ref{def:friezematrix}.
As induction hypothesis (IH) suppose that for some $2\le i\le n-3$ we have
$$c_{i-1,n}c_{1,i}-c_{i,n}c_{1,i-1} = c_{i-1,i}c_{1,n}
$$

For carrying out the induction step we first note that 
\begin{equation} \label{eq:step1}
c_{1,i+1}c_{i-1,i} = -c_{1,i-1}c_{i,i+1} + c_{1,i} c_{i-1,i+1}
\end{equation}
and 
\begin{equation} \label{eq:step2}
c_{i+1,n}c_{i-1,i} = -c_{i,i+1}c_{i-1,n} + c_{i-1,i+1} c_{i,n}.
\end{equation}
These are special cases of \cite[Lemma 2.2]{Mal22} (see also \cite[Theorem 3.3]{CHJ20}), 
namely for the case $1\le i-1\le i\le i+1$ and for the case $i-1\le i\le i+1\le n$, respectively.
We then compute
\begin{eqnarray*}
c_{i,n}c_{1,i+1}-c_{i+1,n}c_{1,i} & \stackrel{(\ref{eq:step1})}{=} & 
c_{i,n}( -c_{1,i-1}\frac{c_{i,i+1}}{c_{i-1,i}} + c_{1,i} \frac{c_{i-1,i+1}}{c_{i-1,i}}) - c_{i+1,n}c_{1,i} \\
& \stackrel{(\ref{eq:step2})}{=} & c_{i,n}( -c_{1,i-1}\frac{c_{i,i+1}}{c_{i-1,i}} + c_{1,i} \frac{c_{i-1,i+1}}{c_{i-1,i}}) \\
& & \hskip0.7cm -c_{1,i}(-\frac{c_{i,i+1}}{c_{i-1,i}}c_{i-1,n} + \frac{c_{i-1,i+1}}{c_{i-1,i}}c_{i,n}) \\
& = & -c_{i,n}c_{1,i-1} \frac{c_{i,i+1}}{c_{i-1,i}} + c_{1,i}c_{i-1,n} \frac{c_{i,i+1}}{c_{i-1,i}} \\
& = & (-c_{i,n}c_{1,i-1}  + c_{1,i}c_{i-1,n})\frac{c_{i,i+1}}{c_{i-1,i}} \\
& \stackrel{(IH)}{=} & c_{i-1,i}c_{1,n}\frac{c_{i,i+1}}{c_{i-1,i}} \\
& = & c_{i,i+1}c_{1,n}
\end{eqnarray*}
and this yields the induction step. 
\end{proof}

We can extend the triangular shape of the upper part of 
any frieze matrix by glide reflection to get a doubly infinite pattern of the form
$$
\begin{array}{cccccccccccc}
& & & \ddots & & & & & & & & \\
~0~ & ~c_{1,2}~ & ~c_{1,3}~ & ~\ldots~ & ~\ldots~ & ~{\blue c_{1,n}}~ & ~0~ & & & & & \\
& 0 & c_{2,3} & \ldots & \ldots & {\blue c_{2,n}} & {\red c_{1,2}} & 0 & & &  &\\
& & 0 & c_{3,4} & \ldots & {\blue c_{3,n}} & {\red c_{1,3}} & c_{2,3} & 0 & & & \\ 
& & & \ddots & \ddots & \vdots & \vdots & \vdots &  & & & \\
& & & & & {\blue c_{n-1,n}} & {\red c_{1,n-1}} & & & & & \\
& & & & & 0 & {\red c_{1,n}} & c_{2,n} & \ldots & c_{n-1,n} & 0 & \\
& & & & & & 0 & c_{1,2} & c_{1,3} &  \ldots & c_{1,n} & 0 \\
& & & & & & & & & \ddots & & 
\end{array}
$$
This pattern is a frieze with coefficients 
(in the sense of \cite{CHJ20}). The crucial consequence of Proposition \ref{prop:overlap} is that
also the neighbouring $2\times 2$-matrices at the overlap (blue and red columns) satisfy the 
required condition for their determinant. But then a frieze matrix in the sense of \cite{Mal22}
is merely a fundamental domain
of a frieze pattern with coefficients. The latter has a combinatorial model by a triangulation of a polygon, 
assigning values to the diagonals such that all Ptolemy relations are satisfied. For the
corresponding frieze pattern determinants, the set-up in the paper by Baur and Marsh \cite{BM12}, with 
indeterminates as values on the diagonals, is the most general one. Therefore, the main result  
in \cite{Mal22}, restated here as Theorem \ref{thm:maldonado}, 
follows from the Baur-Marsh formula (stated here as Corollary 
\ref{cor:BM}). And as we have shown in Section \ref{sec:BM}, the Baur-Marsh formula is 
a special case of our main result Theorem \ref{thm:gluedet}. So we obtain the Baur-Marsh formula and
the Maldonado formula as special cases of our main result in this paper.

%%%%%%%%%%%%%%%%%%%%%%%%%%%%%%%%%%%%%%%%%%%%%%%%%%%%%%%%%%%%%%%%%%%%%%%%%%%%%%%%%%%%%%%%%%%%%

\end{document}